\documentclass[a4paper,12pt]{amsart}

\usepackage[english]{babel}
\usepackage{amsthm}
\usepackage{amssymb}
\usepackage{hyperref}

\addtolength{\oddsidemargin}{-.5cm}
\addtolength{\evensidemargin}{-1.5cm}
\addtolength{\textwidth}{2cm}

\newcommand{\Set}[1]{\left\{\, #1 \,\right\}}

\newcommand{\Zero}[0]{\Set{0}}
\newcommand{\One}[0]{\Set{1}}

\newcommand{\Span}[1]{\langle\, #1 \,\rangle}

\newcommand{\Size}[1]{\left\lvert #1 \right\rvert}

\renewcommand{\phi}[0]{\varphi}
\renewcommand{\theta}[0]{\vartheta}
\renewcommand{\epsilon}[0]{\varepsilon}

\newcommand{\F}{\text{$\mathbf{F}$}}

\newcommand{\ismax}[0]{\lessdot}

\theoremstyle{plain}

\newtheorem{dummy}{Dummy}
\numberwithin{dummy}{section}
\numberwithin{equation}{section}

\newtheorem{theorem}[dummy]{Theorem}
\newtheorem{lemma}[dummy]{Lemma}

\newtheorem{proposition}[dummy]{Proposition}
\newtheorem{corollary}[dummy]{Corollary}
\newtheorem{conj}[dummy]{Conjecture}

\theoremstyle{definition}

\newtheorem{definition}[dummy]{Definition}

\theoremstyle{remark}

\newcommand{\FMEO}[1]{}

\begin{document}

\bibliographystyle{amsalpha}

\date{8 January 2015, 13:26 CET --- Version 7.07
}

\title[Finite morphic $p$-groups]
{Finite morphic $p$-groups}

\author{A.~Caranti}

\address[A.~Caranti]%
 {Dipartimento di Matematica\\
  Universit\`a degli Studi di Trento\\
  via Sommarive 14\\
  I-38123 Trento\\
  Italy} 

\email{andrea.caranti@unitn.it} 

\urladdr{http://science.unitn.it/$\sim$caranti/}

\author{C.~M.~Scoppola}

\address[C.~M.~Scoppola]
{Dipartimento di Ingegneria e Scienze dell'Informazione e Matematica\\
 Via Vetoio (Coppito 1)\\
 I-67100 Coppito\\
Italy}

\email{scoppola@univaq.it}

\begin{abstract}
 According to Li, Nicholson and Zan, a group $G$ is said to be morphic
 if, for  every pair $N_{1}, N_{2}$  of normal subgroups, each  of the
 conditions $G/N_{1}  \cong N_{2}$  and $G/N_{2} \cong  N_{1}$ implies
 the other.  Finite, homocyclic $p$-groups  are morphic, and so is the
 nonabelian group  of order $p^{3}$ and  exponent $p$, for $p$  an odd
 prime. It follows from results of An,  Ding and  Zhan on self dual
 groups that these  are the only  examples of
 finite, morphic $p$-groups. In this paper we obtain the same result
 under a weaker hypotesis.
\end{abstract}

\keywords{finite $p$-groups, morphic groups, self dual groups}

\thanks{The first author gratefully acknowledges the support of the
  Department of Mathematics, University of Trento. 
\endgraf    
The second author gratefully acknowledges the support of the
  Department of Mathematics, University of L'Aquila. 
\endgraf
The authors are
  members of GNSAGA---INdAM}

\subjclass[2010]{20D15}

\maketitle

\thispagestyle{empty}

\section{Introduction}

A module $M$ over  a ring
is  said  to be  \emph{morphic}  if  whenever  a  submodule $N$  is  a
homomorphic image  of $M$, so that  there is an epimorphism  $\phi : M
\to N$, then $M/N \cong \ker(\phi)$. In other words, if $N_{1}$ and
$N_{2}$ are submodules of $M$, then $M/N_{1} \cong N_{2}$ if and only
if $M/N_{2} \cong N_{1}$. This condition, introduced by 
G.~Ehrlich  in~\cite{Ehr}, has been investigated by W.K.~Nicholson and
E.~S\'anchez Campos in~\cite{NISC04, NISC05}, and by J.~Chen, Y.~Li
and Y.~Zhou in \cite{CLZ}. An extension to groups of results of
Ehrlich was obtained by Li and Nicholson
in~\cite{LiNic}.

The following analog  for groups of  the definition of  Ehrlich was
given  by Li, 
Nicholson and L.~Zan  in~\cite{LiNicZan}.
\begin{definition}\label{def:morphic}
 A group $G$ is said to be  \emph{morphic} if, whenever $N$ is a normal
 subgroup of $G$, such that there is  an epimorphism $\phi : G \to N$,
 then $G/N \cong \ker(\phi)$.

 In other words, $G$ is morphic if for every pair $N_{1}$ and $N_{2}$ of
 normal subgroups  of $G$, each of the conditions
 $G/N_{1} \cong N_{2}$ and $G/N_{2} \cong N_{1}$ implies the other.
\end{definition}
In~\cite{LiNicZan}  finite, nilpotent groups  were considered,  and it
was shown  that such a  group is  morphic if and  only if each  of its
Sylow  subgroups is  morphic.   This  leads to  the  study of  finite,
morphic $p$-groups.

Clearly  finite, homocyclic  $p$-groups  are morphic,  and  so is  the
nonabelian group  of order  $p^{3}$ and exponent  $p$, for $p$  an odd
prime.    F.~Aliniaeifard,   Li   and   Nicholson   conjectured that
these are the only examples:
\begin{conj}[\protect{\cite[Conjecture~3.5]{AliLiNic})}]
  \label{conj:morphic}
  The  only finite,  morphic  $p$-groups are  the abelian,  homocyclic
  $p$-groups, and the  nonabelian group of order  $p^{3}$ and exponent
  $p$, for $p$ an odd prime.
\end{conj}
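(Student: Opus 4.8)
The plan is to establish the nontrivial (necessity) direction, the sufficiency having been recorded already in the introduction. The engine of the whole argument is a single elementary consequence of Definition~\ref{def:morphic}. Let $G$ be a finite morphic $p$-group. Every minimal normal subgroup $Z_0$ of a $p$-group is central of order $p$, so $Z_0\cong\Z/p$, and every maximal subgroup $M$ has index $p$ and is normal, so $G/M\cong\Z/p\cong Z_0$. Applying the morphic biconditional to the pair $(N_1,N_2)=(M,Z_0)$ turns the automatic relation $G/M\cong Z_0$ into $G/Z_0\cong M$. Thus \emph{for every maximal subgroup $M$ and every minimal normal subgroup $Z_0$ one has $M\cong G/Z_0$}; in particular all maximal subgroups of $G$ are isomorphic to one group $Q$, and all quotients $G/Z_0$ by minimal normal subgroups are isomorphic to that same $Q$. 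This bottom--top relation drives everything.

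In the abelian case this already finishes the proof. If $A=\bigoplus_i\Z/p^{e_i}$ is not homocyclic, choose indices with $e_1>e_2$; lowering the height in the first and in the second factor produces two maximal subgroups with invariant-factor multisets $\{e_1-1,e_2,\dots\}$ and $\{e_1,e_2-1,\dots\}$, which differ, contradicting the engine. Equivalently, morphicity forces $A/\operatorname{im}\phi\cong\ker\phi$ for every $\phi\in\End(A)$, and a $\phi$ folding a factor of larger order onto one of smaller order makes $\ker\phi$ non-cyclic while the cokernel is cyclic unless all $e_i$ coincide. Hence $A$ is homocyclic.

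For nonabelian $G$ I would combine the engine with its corollary that all quotients $G/Z_0$ by minimal normal subgroups are isomorphic. Comparing derived subgroups, $\Order{(G/Z_0)'}=\Order{G'Z_0/Z_0}$ equals $\Order{G'}/p$ when $Z_0\le G'$ and $\Order{G'}$ when $Z_0\cap G'=1$; since isomorphic quotients have equinumerous derived subgroups, $G$ cannot have one minimal normal subgroup inside $G'$ and another outside it. As $G$ is nilpotent and nonabelian, $G'\cap Z(G)\neq 1$ supplies a minimal normal subgroup inside $G'$, so \emph{every} minimal normal subgroup lies in $G'$; equivalently $\Omega_1(Z(G))\le G'\le\Frat(G)$, and $G$ has no central direct factor $\Z/p$. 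In particular the corollary already excludes groups such as $E\times\Z/p$, and, since $G/G'$ is abelian whereas $M\cong G/Z_0$ is nonabelian as soon as $\Order{G'}>p$, it pushes $G$ toward $\Order{G'}=p$. It then remains to prove $\Order{G'}=p$, $Z(G)=G'$, $d(G)=2$ and that $G$ has exponent $p$, whence $\Order{G}=p^{3}$ and $G$ is the exponent-$p$ group, which exists only for odd $p$. Here I would iterate the engine one layer down, pairing index-$p^{2}$ normal subgroups $N$ with $G/N\cong(\Z/p)^{2}$ against rank-$2$ elementary abelian normal subgroups of the socle, and use, for $p$ odd, the regularity identity $\Order{\Omega_1(G)}=\Index{G}{\mho_1(G)}$ to rule out exponents above $p$.

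The main obstacle is precisely this last step. Unlike the abelian case, the nonabelian classification cannot be run by naive induction, because the morphic property is inherited neither by subgroups nor by quotients; one is forced to argue inside $G$ itself, repeatedly manufacturing explicit pairs $(N_1,N_2)$ of normal subgroups that violate the biconditional unless $G$ has the desired shape, with the twin invariants $d(G)$ and the socle rank, together with the odd-$p$ control of the $p$-power map ($\Omega_1$ versus $\mho_1$), as the principal levers. Controlling the exponent and the nilpotency class simultaneously---ruling out, for instance, class-$2$ groups with $\Order{G'}=p$ but $d(G)>2$, or central products of an extraspecial group with a homocyclic group---is where the real effort lies, and is the point at which, in the literature, one instead invokes the classification of self-dual $p$-groups of An, Ding and Zhan.
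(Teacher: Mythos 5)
Your ``engine'' (every maximal subgroup and every quotient by a minimal normal subgroup is isomorphic to one and the same group $Q$) is correct, your abelian case is complete, and your observation that comparing derived subgroups forces every minimal normal subgroup of a nonabelian morphic $p$-group into $G'$ is a genuine, correct step. But the nonabelian case is never finished: the assertions $\Order{G'}=p$, $Z(G)=G'$, $d(G)=2$ and exponent $p$ are stated as goals, supported only by a gesture (``iterate the engine one layer down'', ``use the regularity identity''), and your last paragraph concedes exactly this --- that ruling out class-$2$ groups with small $G'$ and $d(G)>2$, or central products of an extraspecial group with a homocyclic group, is ``where the real effort lies''. That concession is the gap: such configurations satisfy everything you have actually proved (all their minimal normal subgroups do lie in $G'$; for instance the free class-$2$ exponent-$p$ group on three generators is not visibly excluded by your engine), so some new quantitative input is needed to eliminate them, and the proposal does not supply it. Falling back on An--Ding--Zhang, as you suggest, is precisely what this paper is written to avoid.

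The paper supplies the missing input in two pieces, neither of which your plan has an analogue of. First, a counting argument on the induced alternating bilinear map $\beta : G/\Phi(G)\times G/\Phi(G)\to G'/K$, where $K=\bigcap\Set{M' : M\ismax G}$: the map $U\mapsto U'$ on maximal subspaces is exactly $(p+1)$-to-one (Proposition~\ref{prop:counting_derived}, Corollary~\ref{cor:spread}), which forces $\Size{G'/K}\ge p^{d-1}$ (Corollary~\ref{lemma:sizeofW}). Second, an application of morphicity to $N=\Phi(G)$ itself --- not just to socle-level subgroups --- which produces an elementary abelian normal subgroup $E\cong G/\Phi(G)$ of order $p^{d}$ inside $G$; comparing a series of normal subgroups inside $G'$ built from $E$ with an arbitrary series above $\Phi(G)$ pins down $\Size{S_i'/K}=p^{i-1}$, and for $d>2$ the fact that every element of the exterior square of a $3$-dimensional space is decomposable lets one choose $S_2$ with $S_2'\le K$, a contradiction. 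This yields $d(G)=2$ (Theorem~\ref{thm:main}), after which the quoted two-generated classification, Theorem~\ref{thm:two-gens}, finishes the proof --- a citation you were equally entitled to use, had you first established $d(G)=2$. Without a lower bound of the type $\Size{G'/K}\ge p^{d-1}$, the ``one layer down'' pairing you propose produces isomorphisms but no contradiction, which is why your attempt stalls exactly where it does.
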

They were able to prove their conjecture for
two-generated groups:
\begin{theorem}[\protect{\cite[Theorem 2.10]{AliLiNic}}]
  \label{thm:two-gens}
  Let $G$ be a finite, morphic $p$-group.

  If $G$ can be generated by two elements, then
  \begin{enumerate}
  \item either $G$ is abelian and homocyclic,
  \item or $p$ is odd, and $G$ is isomorphic to the nonabelian group of order
    $p^{3}$ and exponent $p$.
  \end{enumerate}
\end{theorem}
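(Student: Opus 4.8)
The plan is to distill the morphic hypothesis into a single symmetry principle and then apply it layer by layer. The engine is the following observation, which I would establish first: if $N_{1}\cong N_{2}$ are normal subgroups of a finite morphic group $G$ and $G/N_{1}$ happens to be isomorphic to \emph{some} normal subgroup $K$, then $G/N_{1}\cong G/N_{2}$. Indeed, from $G/N_{1}\cong K$ the morphic condition gives $G/K\cong N_{1}\cong N_{2}$, and a second application gives $G/N_{2}\cong K\cong G/N_{1}$; dually, if $G/M_{1}\cong G/M_{2}$ and $M_{1}$ is isomorphic to some quotient of $G$, then $M_{1}\cong M_{2}$. This lets me trade ``isomorphic subgroups'' for ``isomorphic quotients'' and back. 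At the outset I reduce to $d(G)=2$: a cyclic $G$ is homocyclic of rank $1$, so I may assume $G/\Phi(G)\cong(\Z/p\Z)^{2}$.

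For the abelian case I write $G\cong\Z/p^{a}\Z\oplus\Z/p^{b}\Z$ with $a\ge b\ge 1$ and aim to show $a=b$. Supposing $a>b$, let $P_{1},P_{2}$ be the socles of the two cyclic factors, so $P_{1}\cong P_{2}\cong\Z/p\Z$. A direct computation gives $G/P_{1}\cong\Z/p^{a-1}\Z\oplus\Z/p^{b}\Z$ and $G/P_{2}\cong\Z/p^{a}\Z\oplus\Z/p^{b-1}\Z$, which have different invariant factors when $a>b$, hence are not isomorphic. But $G/P_{1}$ embeds in $G$ as a genuine subgroup, so the principle above forces $G/P_{1}\cong G/P_{2}$, a contradiction. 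Thus $a=b$ and $G$ is homocyclic.

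Now assume $G$ is nonabelian with $d(G)=2$. The decisive consequence of the hypothesis is the property $(\star)$: for every central subgroup $C$ of order $p$ and every maximal subgroup $M$ one has $G/C\cong M$. This is immediate from the definition applied to the pair $(M,C)$, since $G/M\cong\Z/p\Z\cong C$ is automatic and the biconditional then forces $G/C\cong M$. In particular all maximal subgroups are isomorphic, each to $G/C$; and as $G$ is nonabelian with $C$ central of order $p$, the quotient $G/C$ cannot be cyclic, so $d(G/C)=2$, whence $C\le\Phi(G)$ and $G'\le M$ for every $M$. The heart of the proof is to upgrade $(\star)$ to the assertion that $G$ is extraspecial of exponent $p$, that is $G'=Z(G)=\Phi(G)$ has order $p$ and every element has order $p$. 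My strategy is to exploit $M\cong G/C$ together with $M'\le G'$ and $M'\cong G'C/C$ to pin down $\Order{G'}$, and then to push the symmetry principle to the next layer, pairing normal subgroups of order $p^{2}$ against those of index $p^{2}$ in order to control $\mho_{1}(G)$. The hard part will be the exponent bound, i.e. showing $\mho_{1}(G)=1$: at the second layer the companion quotient of order $p^{2}$ is no longer forced to be elementary abelian, so I expect a careful case analysis on the structure of $Z(G)$ to be the main obstacle.

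Once $G$ is known to be extraspecial of exponent $p$, the hypothesis $d(G)=2$ forces $\Order{G/\Phi(G)}=p^{2}$ and $\Order{\Phi(G)}=\Order{G'}=p$, so $\Order{G}=p^{3}$ and $G$ is exactly the nonabelian group of order $p^{3}$ and exponent $p$. Finally the prime is automatically odd: a group of exponent $2$ is abelian, so for $p=2$ the nonabelian conclusion $\exp(G)=p$ is incompatible with nonabelianness and no such example exists; concretely $D_{4}$ and $Q_{8}$ already violate $(\star)$, for instance $Q_{8}/Z(Q_{8})\cong(\Z/2\Z)^{2}\not\cong\Z/4\Z\cong M$. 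This yields the stated dichotomy.
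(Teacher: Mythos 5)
A preliminary remark: the paper you are working from does not prove this statement at all --- it is quoted as an external result from Aliniaeifard--Li--Nicholson \cite[Theorem~2.10]{AliLiNic} and used as a black box. So your proposal can only be judged on whether it is complete on its own terms, and it is not. What you do have is correct: the ``symmetry principle'' is a valid two-step application of Definition~\ref{def:morphic}; the abelian case is a genuine proof (for $G \cong \Z/p^{a}\Z \oplus \Z/p^{b}\Z$ with $a > b$, the quotients by the two socle factors have distinct invariant factors, yet $G/P_{1}$ is isomorphic to a subgroup of $G$, which is normal since $G$ is abelian, so the principle forces $G/P_{1} \cong G/P_{2}$, a contradiction); and in the nonabelian case the facts $(\star)$, the isomorphism of all maximal subgroups, and $C \le \Phi(G)$ are correctly derived.

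The genuine gap is that the entire nonabelian core is announced rather than proved. You state that ``the heart of the proof is to upgrade $(\star)$ to the assertion that $G$ is extraspecial of exponent $p$,'' describe a strategy (pin down $\Order{G'}$ via $M' \cong G'C/C$, control $\mho_{1}(G)$ by pairing normal subgroups of order $p^{2}$ against those of index $p^{2}$, case analysis on $Z(G)$), and then explicitly defer it: ``the hard part will be the exponent bound \dots\ I expect a careful case analysis \dots\ to be the main obstacle.'' But that deferred part is precisely where the content of the theorem lives: showing that a nonabelian, $2$-generated, morphic $p$-group satisfies $\Order{G'} = p$, $Z(G) = G' = \Phi(G)$, and $\exp(G) = p$ is essentially the whole of the cited paper \cite{AliLiNic}, not a routine verification --- note, for instance, that at the second layer the companion normal subgroup of order $p^{2}$ need not be central or elementary abelian, which is exactly the difficulty you name without resolving. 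The closing steps (extraspecial of exponent $p$ with $d = 2$ forces order $p^{3}$; $p = 2$ is excluded because exponent-$2$ groups are abelian) are fine, but they are conditional on the unproven claim. As written, the proposal proves the abelian half of the dichotomy and only conjectures the nonabelian half.
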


Aliniaeifard, Li and Nicholson also proved that a finite, morphic
$p$-group satisfies the following property:
\begin{theorem}[\protect{\cite[Proposition 2.1]{AliLiNic}}]
  \label{thm:images}
  Let $G$ be a finite, morphic $p$-group.
  \begin{enumerate}
  \item Every subgroup of $G$ is a homomorphic image of $G$.
  \item   Every homomorphic image of $G$ is isomorphic to a normal
    subgroup of $G$.
  \end{enumerate}
\end{theorem}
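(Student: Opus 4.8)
The plan is to reduce everything to a single statement about how the isomorphism types of subgroups and of quotients are matched, and to exploit the symmetric form of the hypothesis recorded in Definition~\ref{def:morphic}: for normal subgroups $A,B$ of $G$ one has $G/A \cong B$ if and only if $G/B \cong A$. This relation is symmetric and pairs normal subgroups of complementary order, and it is essentially the only tool the hypothesis directly supplies.

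First I would show that part~(1) implies part~(2). Indeed, if every subgroup of $G$ is a homomorphic image, then in particular every normal subgroup $N$ is a quotient, say $N \cong G/K$ with $K \trianglelefteq G$; by the symmetric relation above $G/N \cong K$, so the quotient $G/N$ is isomorphic to the normal subgroup $K$, and since every homomorphic image has the form $G/N$ this is exactly part~(2). Thus the whole content lies in part~(1). It is then convenient to split part~(1) into two independent claims: (I) every \emph{normal} subgroup of $G$ is a homomorphic image of $G$ (which, by the same symmetry, is equivalent to part~(2)); and (II) every subgroup of $G$ is isomorphic to a normal subgroup of $G$. Granting (I) and (II), an arbitrary subgroup $H$ is isomorphic to a normal subgroup $B$, and $B \cong G/K$ by (I), whence $H \cong G/K$, giving part~(1).

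As a clean first instance of (I) I would dispose of maximal subgroups. A maximal subgroup $M$ of a $p$-group is normal of index $p$, so $G/M$ is cyclic of order $p$; and since $G$ is a nontrivial $p$-group, $Z(G)$ contains a normal subgroup $Z_0$ of order $p$, so $G/M \cong Z_0$. Applying the symmetric relation with $A=M$ and $B=Z_0$ yields $G/Z_0 \cong M$, so $M$ is a homomorphic image of $G$. This already recovers, for instance, that every maximal subgroup of the Heisenberg group is isomorphic to $G/Z(G)$.

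The main obstacle is to push (I) and (II) beyond this first layer. The difficulty is that the symmetric relation only \emph{transports} an isomorphism $G/A\cong B$ into $G/B\cong A$; it never \emph{produces} a normal subgroup of a prescribed isomorphism type, so realizability cannot be read off formally. One is therefore tempted to induct on $\Order{G}$ by passing to $G/Z_0$, but this fails outright: a quotient of a finite morphic $p$-group need not be morphic. For example $C_{p^{2}}\times C_{p^{2}}$ is homocyclic, hence morphic, while its quotient $C_{p}\times C_{p^{2}}$ is not, as one sees by taking $N_{1}=C_{p}\times C_{p}$ and $N_{2}=C_{p}\times\Set{1}$, where $(C_{p}\times C_{p^{2}})/N_{1}\cong C_{p}\cong N_{2}$ but $(C_{p}\times C_{p^{2}})/N_{2}\cong C_{p^{2}}\not\cong N_{1}$. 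Subgroups likewise need not be morphic, and non-normal subgroups genuinely occur (the noncentral subgroups of order $p$ in the Heisenberg group), so (II) is not vacuous. I expect the real work to be a \emph{global} argument on the whole subgroup lattice, matching subgroup and quotient isomorphism types directly at the level of $G$ --- that is, showing that a finite morphic $p$-group is self-dual in the sense of An, Ding and Zhan --- rather than any naive induction; this matching is the hard technical heart of the statement.
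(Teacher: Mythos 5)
Your proposal is not a complete proof, and you say as much: after the (correct) reduction of part~(2) to part~(1), the further splitting into claims (I) and (II), and the case of maximal subgroups (which is exactly Lemma~\ref{lemma:allmaxareiso} of the paper), you stop and declare the general case to be ``the hard technical heart of the statement.'' Note also that the paper itself contains no proof to lean on here: Theorem~\ref{thm:images} is quoted from \cite[Proposition~2.1]{AliLiNic} and is only \emph{used} in the paper (to deduce that morphic $p$-groups are \emph{ea}-morphic), so a blind proof attempt has to carry the full weight itself, and yours does not.

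The concrete gap is that you dismissed induction too quickly, and for the wrong reason. Your objection --- that subgroups and quotients of a morphic $p$-group need not be morphic, illustrated correctly by $C_{p}\times C_{p^{2}}$ --- rules out an induction that replaces $G$ by a smaller group, but the induction that works replaces the \emph{subgroup} by one of smaller index inside the fixed group $G$, and never invokes the morphic property for any group other than $G$ itself. Let $H \le G$ with $\Index{G}{H} = p^{k}$, $k \ge 1$. Then $H \le M$ for some maximal subgroup $M$, and, exactly as in your third paragraph, $G/M \cong Z_{0}$ for a central subgroup $Z_{0}$ of order $p$, so Definition~\ref{def:morphic} gives an isomorphism $\psi : M \to G/Z_{0}$. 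By the correspondence theorem $\psi(H) = H_{1}/Z_{0}$ for some subgroup $H_{1}$ with $Z_{0} \le H_{1} \le G$, and $\Index{G}{H_{1}} = \Index{M}{H} = p^{k-1}$. By the induction hypothesis, applied to the subgroup $H_{1}$ of the \emph{same} morphic group $G$, there is an epimorphism $G \to H_{1}$; composing it with the canonical projection $H_{1} \to H_{1}/Z_{0} = \psi(H) \cong H$ exhibits $H$ as a homomorphic image of $G$. This proves part~(1) for all subgroups at once (your claims (I) and (II) are never needed separately), and part~(2) then follows by the symmetry argument you already gave: if $N \trianglelefteq G$, then $G/K \cong N$ for some $K \trianglelefteq G$ by part~(1), whence $G/N \cong K$ by Definition~\ref{def:morphic}. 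So your ingredients were the right ones; the missing idea is to induct on $\Index{G}{H}$ rather than on $\Order{G}$.
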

The latter  result shows that  a finite, morphic $p$-group  belongs to
the class of  \emph{self dual} groups, as introduced by  A. E. Spencer
\cite  {Sp},  and  studied  more  recently  by  L.  An,  J.  Ding  and
Q.  Zhang\cite {AnDiZha}.  A group  is  said to  be self  dual if  the
isomorphism  classes  of its  subgroups  and  of its  quotient  groups
coincide.

Now An, Ding and Zhang prove the following
\begin{theorem}[\protect{\cite[Corollary~7.2]{AnDiZha}}]
  Let  $G$ be  a  finite,  self dual  $p$-group.  Then  
  \begin{enumerate}
  \item either $G$ is abelian, or
  \item $p$  is odd, and $G$  is the direct product  of the nonabelian
    group of order $p^{3}$ and  exponent $p$, by an elementary abelian
    group.
  \end{enumerate}
\end{theorem}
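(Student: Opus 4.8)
The plan is to set the abelian case aside --- if $G$ is abelian we are already in conclusion~(1), with nothing to prove --- and to extract, from the coincidence of the isomorphism types of subgroups and of quotients, enough numerical invariants to pin a nonabelian self dual $G$ down completely. Write $\Order{G} = p^{n}$. The governing principle is \emph{layer matching}: for each $j$ the set of isomorphism types of subgroups of order $p^{n-j}$ equals the set of isomorphism types of quotients $G/N$ with $\Order{N} = p^{j}$. I would apply this at the cyclic, abelian, and top/bottom layers to force, in turn, that $G$ has exponent $p$, is of class $2$ with $G' = \Frat(G)$ of order $p$, and satisfies $G/Z(G) \cong (\Z/p)^{2}$; a short splitting argument then gives the stated form.

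First, the cyclic layer. A cyclic subgroup $\Z/p^{k}$ of $G$ is an isomorphism type occurring among subgroups, hence, by self duality, among quotients; but every cyclic quotient of $G$ factors through the abelianisation $G^{ab} = G/G'$. Running this in both directions shows that $G$ and $G^{ab}$ realise exactly the same cyclic groups, so in particular $\exp(G) = \exp(G^{ab})$. This already eliminates the wrong group of order $p^{3}$: the nonabelian group of order $p^{3}$ and exponent $p^{2}$ has a $\Z/p^{2}$ subgroup but an abelianisation of exponent $p$, so it is not self dual. Once we know $\exp(G) = p$ we get $\mho^{1}(G) = 1$ and hence $\Frat(G) = G'$, so the Frattini and commutator quotients carry the same information.

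The heart of the argument, and the step I expect to be the main obstacle, is to upgrade the cyclic identity $\exp(G) = \exp(G^{ab})$ to the full structural package: that $G$ has class exactly $2$, exponent $p$, $\Order{G'} = p$, and $G/Z(G) \cong (\Z/p)^{2}$. The most useful extra input is the \emph{abelian} layer: the abelian isomorphism types occurring among subgroups and among quotients coincide, and since every abelian quotient factors through $G^{ab}$, the largest abelian quotient is $G^{ab}$ itself, of order $\Index{G}{G'}$; hence the maximal order of an abelian subgroup of $G$ equals exactly $\Index{G}{G'}$. For a class-$2$ group this maximal order is controlled by the maximal isotropic subspaces of the alternating commutator form on $G/Z(G)$, so the identity is a rigid constraint linking $\Order{G'}$, $\Order{Z(G)}$ and the dimension of $G/Z(G)$. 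The genuine difficulty is that one must bootstrap this together with proofs that $G$ is of class $2$ and of exponent $p$ --- ruling out, for odd $p$, both higher class and exponent $p^{2}$ compatible with $\exp(G) = \exp(G^{ab})$ --- before the constraint collapses to $\Order{G'} = p$ and $G/Z(G) \cong (\Z/p)^{2}$. This cross-layer analysis of the self dual symmetry is not purely formal, and it is essentially the content that the work of An, Ding and Zhang has to supply.

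Granting exponent $p$, class $2$, $G' = \Span{c}$ of order $p$, and $G/Z(G) \cong (\Z/p)^{2}$, the conclusion is immediate. Since $\exp(G) = p$ and $G$ is nonabelian, $p$ must be odd, because a group of exponent $2$ is elementary abelian, hence abelian. Choose $x,y$ with $[x,y] = c$ spanning $G/Z(G)$; then $E = \Span{x,y}$ is the nonabelian group of order $p^{3}$ and exponent $p$, and $E \cap Z(G) = \Span{c}$. Writing the elementary abelian group $Z(G)$ as $\Span{c} \times W$ gives $E \cap W = 1$ and $[E,W] = 1$, while the order count $\Order{EW} = p^{3} \cdot \Order{W} = p^{n}$ gives $EW = G$. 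Hence $G = E \times W$ with $W$ elementary abelian, which is conclusion~(2).
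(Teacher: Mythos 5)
There is a genuine gap, and you name it yourself. A preliminary remark: this statement is not proved in the paper at all --- it is quoted verbatim from \cite[Corollary~7.2]{AnDiZha} and used as a black box (the paper's own contribution, Theorem~\ref{thm:main}, deliberately goes through the weaker \emph{ea}-morphic hypothesis precisely to \emph{avoid} relying on the self dual machinery). So your proposal has to stand on its own, and it does not. Your outer layers are fine: the cyclic-layer argument correctly gives $\exp(G)=\exp(G/G')$, the abelian-layer argument correctly gives that the maximal order of an abelian subgroup is $\Index{G}{G'}$, and the final splitting argument (granting class $2$, exponent $p$, $\Order{G'}=p$, and $G/Z(G) \cong (\Z/p)^{2}$) is a complete and correct verification of conclusion~(2). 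But the middle step --- upgrading those numerical identities to the structural package of class $2$, exponent $p$, $\Order{G'}=p$, $G/Z(G)\cong(\Z/p)^{2}$ --- is exactly the theorem's entire content, and at that point you write that this ``is essentially the content that the work of An, Ding and Zhang has to supply.'' That is circular: you are deferring the core of the proof to the very result you are supposed to be proving. Nothing in your sketch rules out, say, a nonabelian self dual group of class $3$, or one with $\Order{G'}=p^{2}$; the constraints you extracted are consistent with many groups that are not of the stated form, and eliminating them is where all the work lies.

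That this missing step is not routine is corroborated by the paper itself: it points out that the An--Ding--Zhang argument (their Theorem~5.2, feeding into Lemma~5.3) appeals to a substantial result from Blackburn and Huppert \cite[Theorem~9.8]{Hup2}, and the whole point of Section~\ref{sec:triples} of this paper is to replace a related estimate by an elementary counting argument --- but only under the \emph{ea}-morphic hypothesis, which is a different (and for that argument, stronger in the relevant direction) assumption than self duality. So a correct self-contained proof of the statement would need either to import that depth or to reconstruct something like the morphic-triple counting for self dual groups; your proposal does neither.
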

We observe here that a proof of Conjecture~\ref{conj:morphic} can be rather straightforwardly obtained as a corollary of this result.

In this paper  we study a class of finite  $p$-groups that properly includes that of morphic $p$-groups, and is not contained in the class of 
 self dual $p$-groups:
\begin{definition}\label{def:eamorphic}
  A  finite  $p$-group  $G$  is said  to  be  \emph{elementary abelian
  morphic}, or \emph{ea}-morphic for short,  if,
  whenever $N$  is a normal subgroup  of $G$, such that  either $N$ is
  elementary abelian, or $G/N$ is elementary abelian, then there is an
  epimorphism $\phi : G \to N$, and $G/N \cong \ker(\phi)$.
\end{definition}
 
Our main result is:
\begin{theorem}\label{thm:main}
  Let $G$ be a finite,  nonabelian \emph{ea}-morphic $p$-group. Then $G$ is
  2-generated.
\end{theorem}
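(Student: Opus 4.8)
The plan is to exploit the defining property in the following form: for every normal subgroup $N$ that is elementary abelian and every epimorphism $\phi\colon G\to N$, one has $\ker\phi\cong G/N$. The force of this comes from the fact that an elementary abelian $N\cong C_p^{k}$ is automatically a quotient of $G$, so every normal subgroup $K$ with $G/K\cong N$ — equivalently every $K$ with $\Frat(G)\le K$ and $|G:K|=p^{k}$ — is realised as $\ker\phi$ for a suitable $\phi$ (compose the projection $G\to G/K$ with an isomorphism $G/K\to N$). Hence all such $K$ are isomorphic to $G/N$, and in particular to one another. The basic case $k=1$, with $N=\langle z\rangle$ a central subgroup of order $p$, is the engine: every maximal subgroup of $G$ occurs as such a $\ker\phi$, so every maximal subgroup is isomorphic to $G/\langle z\rangle$. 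Thus all maximal subgroups of $G$ are isomorphic, and $G/\langle z\rangle\cong G/\langle z'\rangle$ for any two central subgroups of order $p$.

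First I would extract arithmetic from these isomorphisms by comparing derived subgroups. Since $|(G/\langle z\rangle)'|=|G'|/|G'\cap\langle z\rangle|$, its constancy over all central lines $\langle z\rangle$ forces either every such line to lie in $G'$ or every such line to meet $G'$ trivially; as $G$ is nonabelian, $G'\cap Z(G)\neq 1$ provides a central line inside $G'$, and therefore $\Omega_1(Z(G))\le G'\le\Frat(G)$.

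Next comes a dichotomy on the common isomorphism type of the maximal subgroups. If that type is abelian, then $G$ is minimal nonabelian, hence $2$-generated by the classical (R\'edei) description of minimal nonabelian $p$-groups, and we are done. Otherwise every maximal subgroup is nonabelian; the derived-subgroup computation then also excludes $|G'|=p$, since in that case $\Omega_1(Z(G))=G'$ would be the unique central line and $G/\langle z\rangle=G/G'$ would be abelian, contradicting nonabelianness of the maximal type. So in the remaining case $|G'|\ge p^{2}$ and $\Omega_1(Z(G))\le G'$.

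The hard part will be to rule out $d(G)\ge 3$ in this last case. Here I would return to the engine at higher levels: using the $G$-invariant flag inside a maximal elementary abelian normal subgroup, elementary abelian normal subgroups of every rank $k$ up to that maximal rank exist, so for each such $k$ all subgroups $K$ with $\Frat(G)\le K$ and $|G:K|=p^{k}$ are mutually isomorphic; in particular $|K'|$ is constant as $K$ runs over the preimages of all codimension-$k$ subspaces of $G/\Frat(G)$. Read on the alternating commutator form that $G$ induces on the $\mathbb{F}_p$-space $G/\Frat(G)$, this homogeneity under all codimension-$k$ restrictions is very rigid and, together with $\Omega_1(Z(G))\le G'$ and $|G'|\ge p^{2}$, should force $d(G)\le 2$. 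The cleanest route is probably induction on $|G|$ through the nonabelian central quotient $G/\langle z\rangle$, whose minimal number of generators equals $d(G)$ because $\langle z\rangle\le\Frat(G)$. I expect the genuine obstacle to be exactly this inductive step: verifying that ea-morphicity descends to $G/\langle z\rangle$ is not automatic, and it may be necessary to replace the induction by a direct rank estimate read off the commutator form.
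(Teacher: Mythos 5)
Your preparatory steps are sound, and they overlap with the paper's own starting point: the observation that every subgroup above $\Phi(G)$ of index $p^{k}$ is isomorphic to $G/N$ for any elementary abelian normal $N$ of order $p^{k}$, the deduction $\Omega_{1}(Z(G))\le G'$, and the reduction via R\'edei when all maximal subgroups are abelian are all correct. But the entire content of the theorem lies in the step you leave open, namely ruling out $d(G)\ge 3$ when the maximal subgroups are nonabelian, and there your proposal stops being a proof: ``this homogeneity \dots is very rigid and \dots should force $d(G)\le 2$'' is the assertion to be proved, not an argument, and the inductive route through $G/\langle z\rangle$ fails for exactly the reason you flag yourself --- there is no reason that \emph{ea}-morphicity passes to quotients, and the paper never attempts such a descent; it works entirely inside $G$.

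Concretely, three ingredients are missing, and they are what the paper's proof consists of. First, \emph{ea}-morphicity is applied to $N=\Phi(G)$ itself: the kernel $E$ of an epimorphism $G\to\Phi(G)$ satisfies $E\cong G/\Phi(G)$, so $G$ has an elementary abelian normal subgroup of order $p^{d}$; your ``maximal elementary abelian normal subgroup'' is otherwise uncontrolled and need not have rank close to $d$. Second, the rigidity you hope for is made precise by a counting argument on the induced form $\beta: V\times V\to W$ with $V=G/\Phi(G)$, $W=G'/K$, $K=\bigcap\{\,M' : M\ismax G\,\}$: for each maximal subspace $U$ of $V$ there is a hyperplane $\mathcal{T}(U)$ of $U$ such that $S'=U'$ exactly when $S\ge\mathcal{T}(U)$, so the map $U\mapsto U'$ is precisely $(p+1)$-to-one on maximal subspaces, which forces $\dim(G'/K)\ge d-1$ (and, incidentally, $d$ even). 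Third, this estimate, combined with $\lvert G'/\Phi(G)'\rvert=\lvert E\cap G'\rvert$ and $\Phi(G)'\le K$, yields an elementary abelian normal subgroup $L\le G'$ of order $p^{d-1}$; comparing the flag $1=L_{0}\ismax\cdots\ismax L_{d-1}=L$ inside $G'$ with a flag $\Phi(G)=S_{0}\ismax\cdots\ismax S_{d}=G$ via \emph{ea}-morphicity gives $\lvert S_{i}'/K\rvert=p^{i-1}$; then, for $d\ge 3$, since every element of the exterior square of a $3$-dimensional space is decomposable, one may choose $a,b,c$ spanning $S_{3}$ modulo $\Phi(G)$ with $[a,b]\in K$ and set $S_{2}=\langle a,b,\Phi(G)\rangle$, so that $S_{2}'\le K$, contradicting $\lvert S_{2}'/K\rvert=p$. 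Without the subgroup $E$, the counting estimate, and this exterior-square trick, your plan has no mechanism to produce the contradiction, so the proposal has a genuine gap at its decisive step.
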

Since finite, morphic $p$-groups are
\emph{ea}-morphic by~Theorem~\ref{thm:images}, this result, together with
Theorem~\ref{thm:two-gens}, 
yields another proof of
Conjecture~\ref{conj:morphic}. 

In Section~\ref{sec:triples}  we introduce  a linear  algebra setting,
and  use a  counting argument  to obtain  a crucial  estimate for  the
elementary abelian quotients of the derived subgroup. This estimate is
closely  related  to  the one  in~\cite[Lemma~5.3]{AnDiZha},  but  our
treatment is self  contained, and elementary, as it  avoids the appeal
made  in~\cite[Theorem~5.2]{AnDiZha} to  a substantial  result in  the
text  of   N.~Blackburn~and  B.~Huppert~\cite[Theorem~9.8]{Hup2}.   In
Section~\ref{sec:proof}       we        complete       the       proof
of~Theorem~\ref{thm:main}, by comparing two series of normal subgroups
on the top and on the bottom of the group.

Our notation  is mainly standard.  If $H$ is  a subgroup of  the group
$K$, or $H$ is a subspace of  the vector space $K$, we write $H \ismax
K$ to indicate that $H$ is maximal in $K$.

We write $\F_{p}$ for the field with $p$ elements, $p$ a prime.

\section{Morphic triples}
\label{sec:triples}

We record the following immediate consequence of the second
isomorphism theorem, which we will use repeatedly.
\begin{lemma}\label{lemma:obvious}
  If $N$ is a normal subgroup of the group $G$, then
  \begin{equation*}
    (G/N)' \cong \frac{G'}{G' \cap N}.
  \end{equation*}
\end{lemma}

Let $G \ne \One$ be a finite, \emph{ea}-morphic $p$-group, with minimal number
of generators $d$, that is, $\Size{G/\Phi(G)} = p^{d}$.

Let $N$ be a normal subgroup of $G$ of order $p$. Then for
every maximal subgroup $M$ of $G$ one has $G/M \cong N$, and thus,
according to Definition~\ref{def:eamorphic}, also $G/N \cong M$. In
particular we obtain
\begin{lemma}[\protect{\cite[Theorem 37 (1)]{LiNicZan}}] 
\label{lemma:allmaxareiso}
  In a finite, morphic $p$-group, all maximal subgroups are isomorphic.
\end{lemma}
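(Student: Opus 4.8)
The plan is to read off the Lemma directly from the observation recorded just above. Note first that the running hypothesis of this section is that $G \ne \One$ is \emph{ea}-morphic; the Lemma is stated for morphic groups, but a finite morphic $p$-group is \emph{ea}-morphic by Theorem~\ref{thm:images}, so it suffices to prove the statement under the weaker hypothesis. Since $G$ is a nontrivial finite $p$-group its centre is nontrivial, so $G$ possesses a central subgroup $N$ of order $p$; such an $N$ is normal in $G$ and elementary abelian. I would fix this $N$ once and for all.

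Next I would take an arbitrary maximal subgroup $M$ of $G$. In a finite $p$-group every maximal subgroup is normal of index $p$, so $\Index{G}{M} = p$ and the quotient $G/M$ is cyclic of order $p$; in particular $G/M \cong N$, both being cyclic of order $p$. Because $N$ is elementary abelian, the \emph{ea}-morphic hypothesis applies to the pair $M$, $N$, and the isomorphism $G/M \cong N$ forces the companion isomorphism $G/N \cong M$. This is exactly the deduction made in the displayed paragraph preceding the statement.

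The crux of the argument is that the group $G/N$ appearing on the right does not depend on the choice of $M$: once the central subgroup $N$ of order $p$ is fixed, \emph{every} maximal subgroup $M$ of $G$ is isomorphic to the single group $G/N$. Hence any two maximal subgroups are both isomorphic to $G/N$, and therefore to one another, which is the assertion. I do not expect a genuine obstacle here, since the whole content lies in the definition of (\emph{ea}-)morphic; the only point requiring care is the bookkeeping that makes the conclusion uniform in $M$, namely recognising that fixing $N$ pins down the common isomorphism type $G/N$ shared by all the maximal subgroups.
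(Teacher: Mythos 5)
Your proposal is correct and is essentially the paper's own argument: the paper proves the Lemma in exactly this way, fixing a normal subgroup $N$ of order $p$, noting $G/M \cong N$ for every maximal subgroup $M$, invoking Definition~\ref{def:eamorphic} to get $G/N \cong M$, and concluding that all maximal subgroups share the isomorphism type of $G/N$. Your added bookkeeping (existence of a central $N$ of order $p$, and the reduction from morphic to \emph{ea}-morphic via Theorem~\ref{thm:images}) matches the paper's intent, though for the morphic case one could also apply Definition~\ref{def:morphic} directly, since $G/M \cong N$ already exhibits the needed epimorphism.
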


From now on, assume $G$ to be nonabelian, and take $N \le G'$.
Since $M \cong G/N$, Lemma~\ref{lemma:obvious} yields
\begin{equation*}
  \Size{M'}
  =
  \Size{{G'}/{N}}
  =
  \frac{\Size{G'}}{p},
\end{equation*}
so that 
\begin{equation*}
  \Size{G'/M'} = p.
\end{equation*}
Consider the characteristic subgroup $K$ of $G$ defined by
\begin{equation*}
  K = \bigcap \Set{M': M \ismax G}.
\end{equation*}
$G'/K$ is isomorphic to a subgroup of $\prod_{M \ismax G} G'/M'$,
and thus it is elementary abelian.

Consider
the map
\begin{equation*}
  \begin{aligned}
    \beta :\ &G/\Phi(G) \times G/\Phi(G) \to G'/K\\
    &(a \Phi(G), b \Phi(G)) \mapsto [a, b] K.
  \end{aligned}
\end{equation*}
$\beta$ is well defined, as we have
\begin{lemma}\label{lemma:gi-phi-kappa}
  $[G,  \Phi(G)] \le K$. In particular, $\Phi(G)' \le K$.  
\end{lemma}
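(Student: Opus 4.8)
The plan is to prove the apparently stronger pointwise statement $[G,\Phi(G)] \le M'$ for each \emph{single} maximal subgroup $M$ of $G$, and then to intersect over all of them to get $[G,\Phi(G)] \le \bigcap_{M\ismax G} M' = K$. The final assertion is then immediate, since $\Phi(G)' = [\Phi(G),\Phi(G)] \le [G,\Phi(G)] \le K$ because $\Phi(G) \le G$.

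So first I would fix a maximal subgroup $M$. In a finite $p$-group maximal subgroups are normal of index $p$, and $M'$ is characteristic in $M$, hence normal in $G$; this lets me pass to the quotient $\overline{G} = G/M'$. The crucial input is the estimate already established just above the statement, namely $\Size{G'/M'}=p$, which in the quotient reads $\Size{(\overline{G})'}=p$, since the image of $G'$ is $(\overline{G})'$. A normal subgroup of order $p$ of a $p$-group is central (the conjugation action gives a homomorphism into the order-$(p-1)$ group $\Aut(C_{p})$, which must be trivial), so $(\overline{G})' \le Z(\overline{G})$ and $\overline{G}$ has nilpotency class at most $2$.

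The second, and really the only nonroutine, step is to deduce $\Phi(\overline{G}) \le Z(\overline{G})$, which is precisely $[\overline{G},\Phi(\overline{G})]=\overline{1}$, i.e. $[G,\Phi(G)] \le M'$. Writing $\Phi(G)=G'G^{p}$ and taking images gives $\overline{\Phi(G)} = (\overline{G})'\,(\overline{G})^{p}$, so it suffices to see that both factors are central. For $(\overline{G})'$ this is the previous paragraph. For $(\overline{G})^{p}$ I would use that in a class-$2$ group the map $w \mapsto [x,w]$ is a homomorphism into the central subgroup $(\overline{G})'$, together with the identity $[x,y^{p}]=[x,y]^{p}$; since $\Size{(\overline{G})'}=p$ forces $[x,y]^{p}=\overline{1}$, every $p$-th power lies in the kernel of $w\mapsto[x,w]$, whence $(\overline{G})^{p}\le Z(\overline{G})$. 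Thus $\overline{\Phi(G)}\le Z(\overline{G})$, as wanted.

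Finally I would intersect over all maximal subgroups to conclude $[G,\Phi(G)] \le \bigcap_{M\ismax G} M' = K$, and read off $\Phi(G)' \le K$. I expect no serious obstacle beyond bookkeeping with the standard class-$2$ commutator identities; the one genuine idea is recognizing that the previously obtained equality $\Size{G'/M'}=p$ forces $G/M'$ to have class at most $2$, after which the exponent-$p$ property of $(\overline{G})'$ makes all the relevant commutators collapse.
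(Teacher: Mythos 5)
Your proposal is correct and follows essentially the same route as the paper's own proof: both reduce modulo $M'$ for a fixed maximal subgroup $M$, use $\Size{G'/M'}=p$ to get that the derived subgroup of $G/M'$ is central (class at most $2$), handle the $p$-th powers via the identity $[a,b^{p}]\equiv[a,b]^{p}\equiv 1 \pmod{M'}$, and then intersect over all maximal subgroups to land in $K$. The only difference is presentational, as you phrase the argument in the quotient $G/M'$ and make the decomposition $\Phi(G)=G'G^{p}$ explicit, while the paper works with congruences modulo $M'$ and leaves that decomposition implicit.
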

\begin{proof}
  $(G/M')' = G'/M'$ has order $p$. This yields first $[G, G'] \le M'$ for
  all $M \ismax G$, so that $[G, G'] \le K$.
  Also, if $a, b \in G$, then for
  all $M \ismax G$ we have
  \begin{equation*}
    [a, b^{p}] \equiv [a, b]^{p} \equiv 1 \pmod{M'},
  \end{equation*}
  as we have just seen that $[b, [a, b]] \in M '$. Therefore also 
  $[G, G^{p}] \le K$.
\end{proof}

This also yields that $\beta$ is bilinear, as
\begin{equation*} 
[a b, c] 
=
[a, c] \cdot [[a, c], b] \cdot [b, c] 
\equiv 
[a, c] \cdot [b, c] 
\pmod{K}. 
\end{equation*}
The $\F_{p}$-vector spaces $V = G/\Phi(G)$  and $W = G'/K$, with
the map $\beta$, thus satisfy the following definition.
\begin{definition}
  \label{def:morphic-triple}
  Let $V, W$ be vector spaces over $\F_{p}$, and
  \begin{equation*}
    \begin{aligned}
      \beta :\ &V \times V \to W
             \\&(v_{1}, v_{2}) \mapsto [v_{1}, v_{2}]
    \end{aligned}
  \end{equation*}
  be an alternating bilinear map. If $U_{1}, U_{2}$ are subspaces of $V$, write
  $[U_{1}, U_{2}]$ for the linear span of $\beta(U_{1}, U_{2})$, and 
  shorten $[U_{1}, U_{1}]$ to $U_{1}'$.

  $(V, W, \beta)$ is said to be a \emph{morphic triple} if the
  following conditions hold.
  \begin{enumerate}
  \item $V' = W$.
  \item For every $U \ismax V$ one has $U' \ismax W$.
  \item $\bigcap \Set{ U' : U \ismax V } = \Zero$.
  \end{enumerate}
\end{definition}
Considering morphic triples alone is not sufficient to prove
Theorem~\ref{thm:main}, as one can construct examples of morphic
triples that are not associated to finite, morphic $p$-groups. 

\begin{proposition}\label{prop:counting_derived}
  Let $(V, W, \beta)$ be a morphic triple.

  Let $U \ismax V$. Then there exist a unique $T = \mathcal{T}(U) \ismax
  U$ which satisfies the following property.

  For $S \ismax V$, the following are equivalent.
  \begin{enumerate}
  \item $U' = S'$, and
  \item $S \ge T$.
  \end{enumerate}
\end{proposition}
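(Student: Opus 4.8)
The plan is to linearise the problem by composing $\beta$ with the projection onto the one-dimensional quotient $W/U'$. Fix $U \ismax V$; by condition~(2) we have $U' \ismax W$, so $W/U'$ has dimension one, and the composite
\[
  \bar\beta : V \times V \to W \to W/U'
\]
is an alternating $\F_{p}$-bilinear form that I may regard as scalar-valued. By the very definition of $U'$ the hyperplane $U$ is totally isotropic for $\bar\beta$, and $\bar\beta \ne 0$, since otherwise $W = V' \le U'$, contradicting $U' \ne W$.

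First I would reduce the stated equivalence to a statement about $\bar\beta$ alone. For $S \ismax V$ condition~(2) gives $S' \ismax W$, so $S'$ and $U'$ have the same codimension in $W$; hence $S' = U'$ as soon as $S' \le U'$, that is, as soon as $\bar\beta$ vanishes on $S \times S$. Thus $S' = U'$ holds if and only if $S$ is totally isotropic for $\bar\beta$, and the proposition becomes a question about the totally isotropic hyperplanes of a single alternating form.

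Next I would locate $U$ relative to the radical $R$ of $\bar\beta$. The form $\bar\beta$ induces a nondegenerate symplectic form on $V/R$, of even and positive dimension $2n$. Since $U$ is a totally isotropic hyperplane, its image in $V/R$ is totally isotropic, hence of dimension at most $n$; on the other hand that image has dimension $2n-1$ or $2n$, according as $R \le U$ or not. Comparing these forces $R \le U$ and $n = 1$, so $V/R$ is a two-dimensional symplectic space and $R$ is a hyperplane of $U$. I would then set $T = \mathcal{T}(U) = R$, so that $T \ismax U$. In the two-dimensional symplectic space $V/R$ every proper subspace is totally isotropic, and a dimension count shows that for $S \ismax V$ the image of $S$ in $V/R$ is totally isotropic exactly when $R \le S$; combined with the reduction above this yields $S' = U'$ if and only if $S \ge T$, which is existence.

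Uniqueness I would obtain from the fact that any proper subspace is the intersection of the hyperplanes of $V$ that contain it. If $T' \ismax U$ also satisfied the property, then the set $\Set{S \ismax V : S \ge T'}$ would coincide with $\Set{S \ismax V : S' = U'}$, hence with $\Set{S \ismax V : S \ge T}$; intersecting both families gives $T' = T$. The main obstacle is the middle step: one must see that the single constraint that $U$ be a totally isotropic hyperplane already pins the symplectic quotient $V/R$ down to dimension two (and forces $R \le U$), for it is precisely this rigidity that makes $\Set{S \ismax V : S' = U'}$ a single pencil of hyperplanes through the codimension-two subspace $R$.
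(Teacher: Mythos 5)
Your proof is correct, but it takes a genuinely different route from the paper's. The paper also works modulo $U'$, but it fixes a vector $a \in V \setminus U$ and considers the single linear map $\tau : U \to W/U'$, $x \mapsto [a,x] + U'$; surjectivity (from $W = V' = [a,U] + U'$) and rank--nullity give $T = \ker(\tau) \ismax U$, after which the two implications of the equivalence are verified by direct computation, with a separate remark that $T$ is independent of the choice of $a$. You instead pass to the scalar-valued alternating form $\bar\beta$ on all of $V$ and invoke the structure theory of alternating forms: the radical $R$, the nondegenerate quotient $V/R$ of even dimension $2n$, and the bound $n$ on the dimension of totally isotropic subspaces. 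Comparing that bound with the dimension of the image of the totally isotropic hyperplane $U$ correctly forces $R \le U$ and $n = 1$, and you take $T = R$. What your approach buys is conceptual clarity: $T$ is intrinsically the radical of $\bar\beta$, so independence of choices and uniqueness are automatic, one sees that $\mathcal{T}(U)$ depends only on the hyperplane $U'$ of $W$ (so $\mathcal{T}(S) = \mathcal{T}(U)$ whenever $S' = U'$), and the set $\Set{S \ismax V : S' = U'}$ is exposed as exactly the pencil of hyperplanes through the codimension-two subspace $R$, which makes Corollary~\ref{cor:spread} transparent. What the paper's approach buys is economy: it needs nothing beyond rank--nullity for one linear map, whereas you quote the even-dimensionality of nondegenerate alternating forms and the isotropic-dimension bound (both standard, and valid over $\F_{p}$ for all $p$ including $p=2$ with the alternating convention $\beta(v,v)=0$ used in the paper). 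Both arguments are complete and produce the same subspace $T$.
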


\begin{proof}
  Let $a \in V \setminus U$. Consider the linear map $\tau$
  \begin{equation*}
    U \to W \to W/U'
  \end{equation*}
  given by $x \mapsto [a, x] + U'$. Since $W = V' = \Span{a, U}' = [a,
    U] + U'$, this is a surjective linear map,
  with $\dim(W/U') = 1$. Thus
  \begin{equation*}
    T 
    = 
    \mathcal{T}(U) 
    = 
    \ker(\tau) 
    = 
    \Set{x \in U : [a, x] \in U'}
  \end{equation*}
  is a maximal subspace of $U$. By definition, $[a, T] \le U'$.

  $T$ is easily seen to be independent
  of the choice of $a \in V \setminus U$.
  Thus if $T \ismax S \ismax V$, and $S \ne U$, we may assume $S =
  \Span{a, T}$. It follows that
  \begin{equation*}
    S' = [a, T] + T' \le U' + T' \le U'
  \end{equation*}
  and thus $S' = U'$, as they are both maximal subspaces of $W$.

  Suppose conversely that $U \ne S \ismax V$, and $S' = U'$. Thus $S
  \cap U \ismax U$, and if $S = \Span{a, S \cap U}$, then $a \notin
  U$, and we have  $[a, S \cap U] \le S' = U'$, so that $S \cap U =
  \mathcal{T}(U)$. 
\end{proof}

\begin{corollary}
  \label{cor:spread}
  For each $U \ismax V$, the set
  \begin{equation*}
    \Set{ S \ismax V : S' = U'}
\end{equation*}
has $p+1$  elements, namely the subspaces  $S$ such that
$\mathcal{T}(U)  \ismax S 
\ismax V$. 
\end{corollary}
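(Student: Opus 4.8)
The plan is to reduce the corollary to a routine dimension count by invoking Proposition~\ref{prop:counting_derived}. That proposition tells us that for $S \ismax V$ the equality $S' = U'$ holds precisely when $S \ge T$, where $T = \mathcal{T}(U)$. Hence the set in question is exactly
\begin{equation*}
  \Set{ S \ismax V : S' = U' } = \Set{ S \ismax V : S \ge T },
\end{equation*}
and it suffices to count the maximal subspaces of $V$ that contain $T$.

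First I would pin down the codimension of $T$. Since $T \ismax U$ and $U \ismax V$, the subspace $T$ has codimension $1$ in $U$ and $U$ has codimension $1$ in $V$, so $T$ has codimension $2$ in $V$; equivalently $\dim_{\F_{p}}(V/T) = 2$. The correspondence theorem then gives a bijection between the maximal subspaces $S$ of $V$ with $S \ge T$ and the maximal subspaces (hyperplanes) of the quotient $V/T$, via $S \mapsto S/T$.

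It remains to count the hyperplanes of a two-dimensional $\F_{p}$-space. In $V/T$ a hyperplane is simply a one-dimensional subspace, and the number of these is $(p^{2}-1)/(p-1) = p+1$. Transporting back, there are exactly $p+1$ maximal subspaces $S$ of $V$ containing $T$. Finally, each such $S$ satisfies $T \le S$ with $\dim(S/T) = 1$, so $T \ismax S \ismax V$, which matches the description of the set given in the statement.

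There is no real obstacle here: once Proposition~\ref{prop:counting_derived} is available, the argument is a direct application of the correspondence theorem together with the elementary count of lines in the plane over $\F_{p}$. The only point requiring a moment's care is the observation that $T$ has codimension exactly $2$ in $V$, which rests on $T$ being maximal in $U$ rather than merely contained in it.
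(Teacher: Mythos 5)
Your proof is correct and matches what the paper intends: the corollary is stated as an immediate consequence of Proposition~\ref{prop:counting_derived}, with the implicit argument being exactly your count of the maximal subspaces of $V$ containing the codimension-$2$ subspace $\mathcal{T}(U)$, i.e.\ the $p+1$ hyperplanes of the two-dimensional quotient $V/\mathcal{T}(U)$. No gaps; your remark that maximality of $T$ in $U$ is what pins down the codimension is precisely the right point of care.
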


The set $\mathcal{M}$ of
maximal subspaces of $W$ has 
\begin{equation*}
  1 + p + \dots + p^{e-1},
\end{equation*}
elements, where $e = \dim(W)$. Corollary~\ref{cor:spread}
implies that the subset
\begin{equation*}
  \mathcal{Z} 
  = 
  \Set{ Z \ismax W : \text{$Z = U'$ for some $U \ismax V$}}
\end{equation*}
 of  $\mathcal{M}$ has $(1 + p + \dots + p^{d-1})/(1+p)$
 elements. Thus $d$ is even, and 
\begin{equation*}
  \frac{1 + p + \dots + p^{d-1}}{1+p}
  =
  1 + p^{2} + \dots + p^{d-2}.
\end{equation*}
Clearly if $e < d-1$ we have
\begin{equation*}
  \Size{\mathcal{M}}
  =
  1 + p + \dots + p^{e-1}
  <  
  1 + p^{2} + \dots + p^{d-2} 
  =
  \Size{\mathcal{Z}},
\end{equation*}
a contradiction. We have obtained
\begin{corollary}\label{lemma:sizeofW}
  In a morphic triple $(V, W, \beta)$, we have
  \begin{equation*}
    \dim(W) \ge \dim(V) - 1.
  \end{equation*}

  In particular, in  a finite, nonabelian, \emph{ea}-morphic  $p$-group $G$ with
  minimal number of generators $d$ we have
  \begin{equation*}
    \Size{G'/K} \ge p^{d-1}.
  \end{equation*}
\end{corollary}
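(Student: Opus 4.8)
The plan is to compare the number of hyperplanes of $V$ with that of $W$, exploiting that the assignment $U \mapsto U'$ is $(p+1)$-to-one by Corollary~\ref{cor:spread}. First I would set $d = \dim(V)$ and $e = \dim(W)$ and recall that a $d$-dimensional $\F_{p}$-space has exactly $1 + p + \dots + p^{d-1}$ maximal subspaces. By condition~(2) of Definition~\ref{def:morphic-triple} the map $U \mapsto U'$ carries each $U \ismax V$ to a member of $\mathcal{Z} \subseteq \mathcal{M}$, and by Corollary~\ref{cor:spread} every fibre has exactly $p+1$ elements. Hence $\Size{\mathcal{Z}} = (1 + p + \dots + p^{d-1})/(p+1)$.

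Next I would draw the arithmetic conclusions. Writing numerator and denominator as $(p^{d}-1)/(p-1)$ and $(p^{2}-1)/(p-1)$, the quotient equals $(p^{d}-1)/(p^{2}-1)$; since $\Size{\mathcal{Z}}$ is a nonnegative integer, $p^{2}-1$ must divide $p^{d}-1$, which forces $d$ to be even, and then $\Size{\mathcal{Z}} = 1 + p^{2} + \dots + p^{d-2}$.

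Finally I would compare with $\Size{\mathcal{M}} = 1 + p + \dots + p^{e-1}$. From $\mathcal{Z} \subseteq \mathcal{M}$ we get $\Size{\mathcal{Z}} \le \Size{\mathcal{M}}$. Assuming for contradiction that $e < d-1$, i.e. $e \le d-2$, the leading term of $\Size{\mathcal{Z}}$ already satisfies $p^{d-2} > 1 + p + \dots + p^{d-3} \ge \Size{\mathcal{M}}$, giving $\Size{\mathcal{Z}} > \Size{\mathcal{M}}$, a contradiction. Therefore $e \ge d-1$, which is the asserted inequality $\dim(W) \ge \dim(V) - 1$. For the group statement I would invoke that $(G/\Phi(G), G'/K, \beta)$ is a morphic triple with $\dim(V) = d$, so that $\dim(G'/K) = \dim(W) \ge d-1$, i.e. $\Size{G'/K} \ge p^{d-1}$.

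I expect the only delicate points to be the integrality step that forces $d$ even --- which is what lets one rewrite $\Size{\mathcal{Z}}$ in closed form --- and the geometric-sum estimate that converts $e \le d-2$ into the strict inequality $\Size{\mathcal{M}} < \Size{\mathcal{Z}}$; everything else is bookkeeping once Corollary~\ref{cor:spread} is in hand.
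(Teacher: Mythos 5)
Your proposal is correct and follows essentially the same route as the paper: counting maximal subspaces, using Corollary~\ref{cor:spread} to see that $U \mapsto U'$ is $(p+1)$-to-one so that $\Size{\mathcal{Z}} = (1+p+\dots+p^{d-1})/(1+p)$, deducing that $d$ is even, and deriving a contradiction from $e < d-1$ by comparing $\Size{\mathcal{Z}}$ with $\Size{\mathcal{M}}$. Your write-up merely makes explicit the integrality argument and the geometric-sum estimate that the paper leaves implicit.
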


\section{Proofs}
\label{sec:proof}

 We are now ready to prove Theorem~\ref{thm:main}.  
 
 Let $G$  be a  finite,
nonabelian, \emph{ea}-morphic group, with minimum number of generators $d >
2$. We want to derive a contradiction. 

By  Definition~\ref{def:eamorphic}, there  is  an  epimorphism $G  \to
\Phi(G)$, and  if $E$ is its  kernel, so that $G/E  \cong \Phi(G)$, we
also have  $G/\Phi(G) \cong E$.  Since $\Size{G/\Phi(G)} =  p^{d}$, we
have that  $E$ is an  elementary abelian,  normal subgroup of  $G$, of
order $p^{d}$.  Let
\begin{equation*}\label{eq:thecap}
  p^{t} = \Size{E \cap G'},
\end{equation*}
so that $t \le d$.
Since $G/E
\cong \Phi(G)$, Lemma~\ref{lemma:obvious} yields
\begin{equation*}
  \Size{\Phi(G)'}
  =
  \Size{\frac{G'}{E \cap G'}}
  =
  \frac{\Size{G'}}{p^{t}}.
\end{equation*}
In view of  Lemma~\ref{lemma:gi-phi-kappa}~and
Corollary~\ref{lemma:sizeofW}, we obtain
\begin{equation*}\label{eq:Gprime-over-PhiGprime}
  \Size{E \cap G'}
  =
  \Size{\frac{G'}{\Phi(G)'}}
  =
  p^{t},
  \qquad
  \text{for $t \in \Set{d, d-1}$}.
\end{equation*}
Let thus $L$ be an elementary abelian, normal subgroup of order
$p^{d-1}$ contained in $G'$ (we take $L$ to be $E \cap G'$ if this has
order $p^{d-1}$, otherwise if $E \le G'$ we take $L$ to be a maximal
subgroup of $E$ which is 
normal in $G$), and let
\begin{equation*}
  \One = L_{0} \ismax L_{1} \ismax L_{2} \ismax \dots \ismax L_{d-1} =
  L \le G',
\end{equation*}
be a series of subgroups, each normal in $G$. In particular,
$\Size{L_{i}} = p^{i}$ for each $i$. 

Consider an
arbitrary series
\begin{equation*}
  \Phi(G) = S_{0} \ismax S_{1} \ismax S_{2} \ismax \dots \ismax
  S_{d-1} \ismax S_{d} = G.
\end{equation*}
(We will make a more  precise choice of $S_{2}$ later.) In particular,
each $S_{i}$ is normal in $G$, and $\Size{G/S_{i}} = p^{d-i}$ for each
$i$. Clearly  for $i \ge  1$ one has  $G/S_{i} \cong L_{d-i}$  as both
groups   are    elementary   abelian,   of    order   $p^{d-i}$.    By
Definition~\ref{def:morphic}, we also have
\begin{equation*}
  G/L_{d-i} \cong  S_{i},
  \qquad
  \text{for all $i \ge 1$.}
\end{equation*}
For $i \ge 1$ we thus have from~Lemma~\ref{lemma:obvious}
\begin{equation*}
  \Size{S_{i}'}
  =
  \Size{{G'}/{L_{d-i}}}
  =
  \frac{\Size{G'}}{p^{d-i}},
\end{equation*}
that is,
\begin{equation}\label{eq:sizeofLi}
  \Size{G' / S_{i}'} = p^{d-i}.
\end{equation}

In particular,  $\Size{G'/S_{1}'} =  p^{d-1}$. Since $S_{1}  = \Span{a,
  \Phi(G)}$ for some $a \in G$, Lemma~\ref{lemma:gi-phi-kappa} implies
$S_{1}' \le [\Span{a}, \Phi(G)] \, \Phi(G)' \le  K$, so that
Corollary~\ref{lemma:sizeofW} 
yields $\Size{G'/K} = p^{d-1}$ and $S_{1}'= K$. It follows that $S_{i}'
\ge S_{1}' = K$ for each $i \ge 1$.
We obtain from~\eqref{eq:sizeofLi} 
\begin{equation*} 
\Size{S_{i}'/K} = \Size{(G'/K)/(G'/S_{i}')}  =   p^{i-1}
\end{equation*} 
for  each  $i   \ge  1$.   

In
particular,  $\Size{S_{2}'  /  K}  =  p$  and  
$\Size{S_{3}'  /  K}  =p^{2}$. 
(This is the only  point where we use the assumption  $d > 2$.) Since
$S_{3} = \Span{a, b, c, \Phi(G)}$ for  some $a, b, c \in G$,
Lemma~\ref{lemma:gi-phi-kappa}  yields  $S_{3}' \le  \Span{[a,b], 
  [a,c], [b, c], K}$. Since every  element of the exterior square of a
vector space of dimension $3$ is  a decomposable tensor, we may choose
$a, b,  c$ so that $[a,  b] \in K$.   Choosing
$S_{2} =  \Span{a, b,  \Phi(G)}$, Lemma~\ref{lemma:gi-phi-kappa}
yields $S_{2}' \le K$, a final contradiction. 

Finally, we derive a proof of Conjecture~\ref{conj:morphic}.

If $G$ is a finite, abelian, morphic $p$-group, it is not difficult to
see from Lemma~\ref{lemma:allmaxareiso} that $G$ must be homocyclic.

In  view of  Theorem~\ref{thm:two-gens},  let thus assume that $G$ is
nonabelian.  
Since morphic $p$-groups are \emph{ea}-morphic, by
Theorem~\ref{thm:main} we conclude that $G$ is 2-generated, and by
Theorem~\ref{thm:images} we have the result.


\begin{thebibliography}{ALN13}

\bibitem[ADZ11]{AnDiZha}
Lijian An, Jianfang Ding, and Qinhai Zhang, \emph{Finite self dual groups}, J.
  Algebra \textbf{341} (2011), 35--44. \MR{2824510 (2012g:20037)}

\bibitem[ALN13]{AliLiNic}
Farid Aliniaeifard, Yuanlin Li, and W.~K. Nicholson, \emph{Morphic
  {$p$}-groups}, J. Pure Appl. Algebra \textbf{217} (2013), no.~10, 1864--1869.
  \MR{3053521}

\bibitem[CLZ06]{CLZ}
Jianlong Chen, Yuanlin Li, and Yiqiang Zhou, \emph{Morphic group rings}, J.
  Pure Appl. Algebra \textbf{205} (2006), no.~3, 621--639. \MR{2210221
  (2006k:16047)}

\bibitem[Ehr76]{Ehr}
Gertrude Ehrlich, \emph{Units and one-sided units in regular rings}, Trans.
  Amer. Math. Soc. \textbf{216} (1976), 81--90. \MR{0387340 (52 \#8183)}

\bibitem[HB82]{Hup2}
Bertram Huppert and Norman Blackburn, \emph{Finite groups. {II}}, Grundlehren
  der Mathematischen Wissenschaften [Fundamental Principles of Mathematical
  Sciences], vol. 242, Springer-Verlag, Berlin-New York, 1982, AMD, 44.
  \MR{650245 (84i:20001a)}

\bibitem[LN10]{LiNic}
Yuanlin Li and W.~K. Nicholson, \emph{Ehrlich's theorem for groups}, Bull.
  Aust. Math. Soc. \textbf{81} (2010), no.~2, 304--309. \MR{2609111
  (2011c:20065)}

\bibitem[LNZ10]{LiNicZan}
Yuanlin Li, W.~K. Nicholson, and Libo Zan, \emph{Morphic groups}, J. Pure Appl.
  Algebra \textbf{214} (2010), no.~10, 1827--1834. \MR{2608111 (2011g:20066)}

\bibitem[NSC04]{NISC04}
W.~K. Nicholson and E.~S{\'a}nchez~Campos, \emph{Rings with the dual of the
  isomorphism theorem}, J. Algebra \textbf{271} (2004), no.~1, 391--406.
  \MR{2022487 (2004i:16019)}

\bibitem[NSC05]{NISC05}
\bysame, \emph{Morphic modules}, Comm. Algebra \textbf{33} (2005), no.~8,
  2629--2647. \MR{2159493 (2006c:16011)}

\bibitem[Spe72]{Sp}
Armond~E. Spencer, \emph{Self dual finite groups}, Ist. Veneto Sci. Lett. Arti
  Atti Cl. Sci. Mat. Natur. \textbf{130} (1971/72), 385--391. \MR{0322052 (48
  \#416)}

\end{thebibliography}
\providecommand{\bysame}{\leavevmode\hbox to3em{\hrulefill}\thinspace}
\providecommand{\MR}{\relax\ifhmode\unskip\space\fi MR }
\providecommand{\MRhref}[2]{%
  \href{http://www.ams.org/mathscinet-getitem?mr=#1}{#2}
}
\providecommand{\href}[2]{#2}

\end{document}